\documentclass[10pt]{amsart}
\usepackage[letterpaper,hmargin=1in,vmargin=1.25in]{geometry}
\usepackage{amsmath,amssymb,amsthm,amsfonts}
\usepackage{graphicx}
\usepackage{color}

\newtheorem{theorem}{Theorem}[section]
\newtheorem{lemma}[theorem]{Lemma}
\theoremstyle{definition}
\newtheorem{algorithm}[theorem]{Algorithm}

\newcommand{\gp}[1]{{\left\langle #1 \right\rangle}}

\def\MF{{\mathbb{F}}}
\def\MZ{{\mathbb{Z}}}
\def\MN{{\mathbb{N}}}

\def\ovt{{\bf t}}

\title{Cryptanalysis of Anshel-Anshel-Goldfeld-Lemieux key agreement protocol}

\date{Version 4 $\bullet$ September 30, 2007}

\author[]{Alex D. Myasnikov}
\address{Department of Mathematics, Stevens Institute of Technology,
Hoboken, NJ 07030} \email{amyasnikov@stevens.edu}

\author[]{Alexander Ushakov}
\address{Department of Mathematics, Stevens Institute of Technology,
Hoboken, NJ 07030} \email{aushakov@stevens.com}

\begin{document}

\maketitle

\begin{abstract}
The Anshel-Anshel-Goldfeld-Lemieux (abbreviated AAGL)
key agreement protocol \cite{AAG2} is proposed to be used
on low-cost platforms which constraint the use of computational
resources.
The core of the protocol is the concept of an Algebraic Eraser$^{TM}$
(abbreviated AE) which is claimed to be a suitable primitive for use within
lightweight cryptography.
The AE primitive is based on a new and ingenious idea of using an action of a semidirect
product on a (semi)group to obscure involved algebraic structures.
The underlying motivation for AAGL protocol
is the need to secure networks
which deploy Radio Frequency Identification (RFID) tags used for identification,
authentication, tracing and point-of-sale applications.

In this paper we revisit the computational problem on which AE relies
and heuristically analyze its hardness.
We show that for proposed
parameter values it is impossible to instantiate the secure protocol.
To be more precise, in $100\%$ of randomly generated instances of the protocol
we were able to find a secret conjugator $z$ generated by TTP algorithm
(part of AAGL protocol).
\end{abstract}

\section{The Colored Burau Key Agreement Protocol}

A general mathematical framework of AAGL protocol
is quite complicated. In this paper we try to omit unnecessary details and
simplify the notation of \cite{AAG2} as much as possible.
We refer an interested reader to \cite[Sections 2 and 3]{AAG2}
for a complete description. Here we start out
by giving a particular implementation of the primitive
called the Colored Burau Key Agreement Protocol (CBKAP).

\subsection{A platform group}
\label{se:platform_group}

Fix an integer $n \ge 7$ and a prime $p$.
Let $\ovt = (t_1, \ldots ,t_n)$ be a tuple of formal variables.
Define matrices
$$
x_1(\ovt) =
\left(
\begin{array}{lllll}
-t_1 & 1 & & \\
     & 1 & & \\
     &   & \ddots & \\
     &   & & 1 \\
\end{array}
\right)
$$
and for $i=2,\ldots,n-1$
$$
x_i(\ovt) =
\left(
\begin{array}{lllll}
1 & & & \\
  & \ddots  & & \\
  &  t_i & -t_{i} & 1 \\
  & &   & \ddots & \\
  & &   & & 1 \\
\end{array}
\right)
$$
which is the identity matrix except for the $i$th row where it has successive entries
$t_i$, $-t_i$, $1$ with $-t_i$ on the diagonal.
We look at the matrices
    $x_1(\ovt), \ldots, x_{n-1}(\ovt)$
as elements of the group $GL(n,\MF_p(\ovt))$ of $n \times n$ matrices
with entries as Laurent polynomials over the finite field $\MF_p$.
The symmetric group on $n$ symbols $S_n$ acts on $GL(n,\MF_p(\ovt))$
by permuting the variables $t_1, \ldots t_n$. We denote the result
of the action of $s\in S_n$ on $x \in GL(n,\MF_p(\ovt))$ by
$^sx$.

The semidirect product $GL(n,\MF_p(\ovt)) \rtimes S_n$
of the groups $GL(n,\MF_p(\ovt))$ and $S_n$ relative
to the defined action of $S_n$ on matrices $GL(n,\MF_p(\ovt))$ is
a set of pairs
    $$\{ (m,s) \mid m\in GL(n,\MF_p(\ovt)) ,~ s \in S_n \}$$
with multiplication given by
    $$(m_1,s_1) \cdot (m_2,s_2) := (m_1 \cdot ^{s_1}m_2, s_1 \cdot s_2).$$
Denote by $s_i = (i,i + 1) \in S_n$ the transposition which
interchanges $i$ and $i+1$ and by $g_i$ the element of
the semidirect product $GL(n,\MF_p(\ovt)) \rtimes S_n$
    $$g_i = (x_i(\ovt),s_i).$$
A subgroup
    $$G = \gp{ g_1, \ldots, g_{n-1} }$$
of $GL(n,\MF_p(\ovt)) \rtimes S_n$ is called the {\em colored Burau
group}. The group $G$ is a platform group for AAGL key agrement protocol.

Recall that the group $B_n$ of $n$-strand braids has the classical Artin's presentation:
\begin{displaymath}
B_n =
 \left\langle
\begin{array}{lcl}\sigma_1,\ldots,\sigma_{n-1} & \bigg{|} &
\begin{array}{ll}
 \sigma_i \sigma_j \sigma_i= \sigma_j \sigma_i \sigma_j & \textrm{if }|i-j|=1 \\
 \sigma_i \sigma_j=\sigma_j \sigma_i & \textrm{if }|i-j|>1
\end{array}
\end{array}
\right\rangle.
\end{displaymath}
A word over the group alphabet $\{\sigma_1,\ldots,\sigma_{n-1}\}$ is called a
{\em braid word}.
Any $n$-strand braid can be represented by a braid word.
The length of a shortest braid word representing an element $g \in B_n$
is called the {\em geodesic length} of $g$ relative to the Artin's set of generators
and is denoted by $|g|$. The function $|\cdot| : B_n \rightarrow \MN$ is called
the {\em geodesic length function} on $B_n$.

\begin{lemma} \label{le:braid_rels}
The elements $g_i = (x_i(\ovt), s_i)$,
for $i = 1, 2, \ldots , n-1$, satisfy the braid
relations and hence determine a representation of the braid group $B_n$,
i.e., the mapping $\sigma_i \stackrel{\varphi}{\mapsto} g_i$ defines
a group epimorphism
    $$\varphi: B_n \rightarrow G.$$
\end{lemma}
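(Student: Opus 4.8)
The plan is to invoke the universal property of the Artin presentation (von Dyck's theorem): since $B_n$ is defined by the generators $\sigma_1,\ldots,\sigma_{n-1}$ subject only to the braid relations, a homomorphism $\varphi : B_n \rightarrow G$ with $\varphi(\sigma_i) = g_i$ exists \emph{precisely} when the elements $g_i$ satisfy those same relations inside $GL(n,\MF_p(\ovt)) \rtimes S_n$. Surjectivity is then automatic, because $G$ is by definition $\gp{g_1,\ldots,g_{n-1}}$, so $\varphi$ is an epimorphism onto $G$ the moment it is well defined. Thus the entire content of the lemma is the verification of the two families of relations, which I would organize according to the projection onto each factor of the semidirect product.

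First I would dispose of the $S_n$-component. The projection $(m,s)\mapsto s$ is a homomorphism onto $S_n$ sending $g_i \mapsto s_i = (i,i+1)$, and the adjacent transpositions are the standard Coxeter generators of $S_n$, which are well known to satisfy the braid relations (indeed $S_n$ is the quotient of $B_n$ by the additional relations $\sigma_i^2 = 1$). Hence the second coordinates of both sides of every relation agree automatically, and it remains only to match the matrix (first) coordinates.

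Next, the far-commutation relations $|i-j|>1$. Here I would use two observations. The matrix $x_i(\ovt)$ differs from the identity only in its $i$th row, whose support lies in columns $i-1,i,i+1$, and the only variable occurring in it is $t_i$; consequently, when $|i-j|>1$ the transposition $s_i$ fixes $t_j$, so ${}^{s_i}x_j(\ovt) = x_j(\ovt)$ and likewise ${}^{s_j}x_i = x_i$. Writing $x_i = I + E_i$ with $E_i$ supported in row $i$, a one-line computation shows $E_i E_j = 0$ when $|i-j|>1$ (the column support $\{i-1,i,i+1\}$ of $E_i$ misses the row index $j$ of $E_j$), so $x_i x_j = I + E_i + E_j = x_j x_i$. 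Feeding this into the multiplication rule $(m_1,s_1)(m_2,s_2) = (m_1\cdot {}^{s_1}m_2, s_1 s_2)$ yields $g_i g_j = g_j g_i$ on both coordinates.

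Finally, the braid relation for $j = i+1$, which I expect to be the crux. Expanding both sides with the semidirect-product multiplication, the first coordinates must satisfy
\[
x_i(\ovt) \cdot {}^{s_i}x_{i+1}(\ovt) \cdot {}^{s_i s_{i+1}}x_i(\ovt) = x_{i+1}(\ovt) \cdot {}^{s_{i+1}}x_i(\ovt) \cdot {}^{s_{i+1} s_i}x_{i+1}(\ovt).
\]
Because every matrix here is the identity outside the rows and columns indexed by $\{i-1,i,i+1,i+2\}$, I would localize the check to this $4\times 4$ block. The only delicate point is the bookkeeping of the variable substitutions induced by the twists: ${}^{s_i}x_{i+1}$ replaces $t_{i+1}$ by $t_i$, and the doubly-twisted factors ${}^{s_i s_{i+1}}x_i$ and ${}^{s_{i+1} s_i}x_{i+1}$ must be evaluated by tracking where the composite permutation sends the single variable occurring in each matrix. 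With those substitutions fixed, multiplying out the two triple products and comparing entries is a finite, mechanical check that they coincide as matrices over $\MF_p(\ovt)$. The main obstacle is therefore not conceptual but the disciplined tracking of these variable permutations through the triple product; everything else follows from the universal property and the projection argument above.
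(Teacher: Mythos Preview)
Your proposal is correct and follows exactly the route the paper takes: the paper's entire proof is the phrase ``Straightforward check,'' and what you have written is a careful unpacking of precisely that check---reducing via von Dyck to the two families of relations, handling the $S_n$-coordinate by the Coxeter presentation, and verifying the matrix coordinate by the support/commutation argument for $|i-j|>1$ and a localized block computation for $j=i+1$. There is nothing to add; your outline is already more explicit than the paper's own proof.
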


\begin{proof}
Straightforward check.
\end{proof}

\subsection{Action of the platform group on $GL(n,\MF_p)$}

Fix elements $\tau_1, \ldots, \tau_n \in \MF_p$ and define a
homomorphism $\pi$ which maps $GL(n,\MF_p(\ovt))$ into $GL(n,\MF_p)$
by assigning the value $\tau_i$ to the variable $t_i$, i.e., by
evaluating a matrix at $\tau_1, \ldots, \tau_n$. We call $\pi$ the
{\em evaluation function}.
\begin{quote}
{\bf Assumption on $\tau_1, \ldots, \tau_n$.} We assume that $\pi$
defines a correct group homomorphism.
\end{quote}
Relative to the chosen tuple $\tau_1, \ldots, \tau_n \in \MF_p$
and the corresponding function $\pi$ one can define an action of
$GL(n,\MF_p(\ovt)) \rtimes S_n$ on $GL(n,\MF_p) \times S_n$ by putting
    $$(m_1,s_1) \star (m_2,s_2) = (m_1 \cdot \pi(^{s_1}m_2) , s_1 s_2)$$
where $\star$ denotes the action. Indeed, it is not difficult to check that
$\star$ is an action and satisfies the property
    $$((m_1,s_1) \star (m_2,s_2) ) \star (m_3,s_3) = (m_1,s_1) \star ( (m_2,s_2) \cdot (m_3,s_3) ).$$
We say that $(m_1,t_1)$ and $(m_2,t_2)$ {\em $\star$-commute} if the equality
    $$(\pi(m_1),s_1) \star (m_2,s_2) = (\pi(m_2),s_2) \star (m_1,s_1)$$
holds. The next lemma is obvious.

\begin{lemma}
Let $w = \prod_{k=1}^m (x_{i_k}(\ovt),s_{i_k})$
and $v = \prod_{p=1}^l (x_{j_p}(\ovt),s_{j_p})$
be such that $|i_k-j_p|>1$ for every $k=1,\ldots,m$ and
$p=1,\ldots,l$. Then the elements $w$ and $v$ $\star$-commute.
\end{lemma}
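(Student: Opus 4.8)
The plan is to deduce the (weaker) notion of $\star$-commutativity from honest commutativity of $w$ and $v$ inside the semidirect product, and to obtain that honest commutativity from the far-commutation relations of the braid group via the epimorphism $\varphi$ of Lemma~\ref{le:braid_rels}.

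First I would record that $w = g_{i_1}\cdots g_{i_m}$ and $v = g_{j_1}\cdots g_{j_l}$, so that $w = \varphi(u)$ and $v = \varphi(u')$ with $u = \sigma_{i_1}\cdots\sigma_{i_m}$ and $u' = \sigma_{j_1}\cdots\sigma_{j_l}$. Since $|i_k-j_p|>1$ for all $k,p$, each letter $\sigma_{i_k}$ commutes with each letter $\sigma_{j_p}$ by Artin's relation $\sigma_a\sigma_b=\sigma_b\sigma_a$ (valid when $|a-b|>1$); sliding the letters of $u'$ past those of $u$ one at a time then yields $uu'=u'u$ in $B_n$. Applying the homomorphism $\varphi$ gives $wv=vw$ in $G$.

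Next I would unwind the definition of $\star$-commutativity. Writing $w=(m_1,s_1)$ and $v=(m_2,s_2)$, the two sides to be compared are $(\pi(m_1),s_1)\star(m_2,s_2)=(\pi(m_1)\cdot\pi({}^{s_1}m_2),\,s_1s_2)$ and $(\pi(m_2),s_2)\star(m_1,s_1)=(\pi(m_2)\cdot\pi({}^{s_2}m_1),\,s_2s_1)$. Because $\pi$ is a homomorphism, the matrix entries equal $\pi(m_1\cdot{}^{s_1}m_2)$ and $\pi(m_2\cdot{}^{s_2}m_1)$, i.e. $\pi$ applied to the matrix parts of $wv$ and of $vw$ respectively. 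The identity $wv=vw$ established above equates both the matrix parts ($m_1\cdot{}^{s_1}m_2=m_2\cdot{}^{s_2}m_1$) and the permutation parts ($s_1s_2=s_2s_1$) of these products; applying $\pi$ to the former and keeping the latter shows that the two sides of the $\star$-commutativity equation coincide.

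The computation is entirely routine; the only point worth flagging is the \emph{logical direction}. The relation $\star$-commutativity is a priori weaker than genuine commutativity, since it only sees the evaluated matrices $\pi(\cdot)$, so it is legitimate --- and convenient --- to prove the stronger statement $wv=vw$ first and then push it through $\pi$. The step that does the real work is thus the far-commutation in $B_n$, which is available verbatim from Artin's presentation; everything after it is bookkeeping through the homomorphisms $\varphi$ and $\pi$.
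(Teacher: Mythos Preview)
Your argument is correct. The paper itself offers no proof for this lemma --- it simply declares ``The next lemma is obvious'' --- so there is nothing to compare against; your proposal supplies exactly the routine verification the authors omit, and follows the same strategy the paper later sketches in its discussion of the TTP algorithm (pull commutativity from $B_n$ through $\varphi$, then through $\pi$).
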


\subsection{The protocol}

Before the parties perform actual transmissions the following data is being prepared
by the Third Trusted Party (TTP).
\begin{itemize}
    \item
A matrix $m_0 \in GL(n,\MF_p)$ which has an irreducible
characteristic polynomial over $\MF_p$. The choice of $m_0$ is not relevant
for the purposes of this paper, we refer the reader to \cite{AAG2}
for more information on how $m_0$ can be generated randomly.
    \item
$\star$-commuting subgroups $A = \gp{w_1, \ldots, w_\gamma}$ and $B = \gp{u_1, \ldots, u_\gamma}$ of
the group $G$.
We want to point out that the elements $w_i$ and $v_j$
are given to us as products of generators of $G$ and there inverses, i.e., as formal
words in group alphabet $\{g_1,\ldots, g_{n-1}\}$. We prefer this form because it allows us to
avoid time consuming matrix multiplication in $GL(n,\MF_p(\ovt))$.
\end{itemize}
Both, the matrix $m_0$ and subgroups $A$ and $B$, can be chosen only once.
Now, the public and private keys are chosen as follows:

\noindent{\bf Alice's Private Key:} is a pair which consists of a matrix of the form
    $$n_a = l_1 m_0^{\alpha_1} + l_2 m_0^{\alpha_2} + \ldots + l_r m_0^{\alpha_r} \in GL(n,\MF_p)$$
(where $l_1 , \ldots, l_r \in \MF_p$ and
$r, \alpha_1, \ldots, \alpha_r \in \MZ^+$) and
a random sequence $w_{i_1}^{\varepsilon_1}, \ldots, w_{i_m}^{\varepsilon_m}$ of generators of $A$
and their inverses.

\noindent{\bf Alice's Public Key:}
is an element
    $$A_{public} = (n_a,id) \star w_{i_1}^{\varepsilon_1} \star \ldots \star w_{i_m}^{\varepsilon_m} \in GL(n,\MF_p) \times S_n.$$
Recall that each $w_{i_k}$ is given as a formal product of the generators of $G$.
To perform the $\star$-operation efficiently one should not directly compute $w_{i_k}$,
but consequently apply the factors of $w_{i_k}$ to the argument.

\noindent{\bf Bob's Private Key:} is a pair which consists of a matrix of the form
    $$n_b = l_1' m_0^{\beta_1} + l_2' m_0^{\beta_2} + \ldots + l_{r'}' m_0^{\beta_{r'}} \in GL(n,\MF_p)$$
(where $l_1' , \ldots, l_{r'}' \in \MF_p$ and
$r', \beta_1, \ldots, \beta_{r'} \in \MZ^+$) and
a random sequence $v_{j_1}^{\delta_1}, \ldots, v_{j_l}^{\delta_l}$ of generators of $B$
and their inverses.

\noindent{\bf Bob's Public Key:}
is a pair
    $$B_{public} = (n_b,id) \star v_{j_1}^{\delta_i} \star \ldots \star v_{j_l}^{\delta_l} \in GL(n,\MF_p) \times S_n.$$
Again, each $v_{j_k}$ is given as a formal product of the generators of $G$.
To perform the $\star$-operation efficiently one should not directly compute $v_{j_k}$,
but consequently apply the factors of $v_{j_k}$ to the argument.

\noindent{\bf The shared key:}
is an element of $GL(n,\MF_p) \times S_n$ obtained by Alice in the form
    $$[(n_a,id) \cdot B_{public}] \star w_{i_1}^{\varepsilon_1} \star \ldots \star w_{i_m}^{\varepsilon_m}$$
and by Bob in the form
    $$[(n_b,id) \cdot A_{public}] \star v_{j_1}^{\delta_i} \star \ldots \star v_{j_l}^{\delta_l}$$
It requires a little work to prove that
the obtained elements are indeed equal in $GL(n,\MF_p)$.
We omit the proof.

\subsection{TTP algorithm}

The cornerstone part of the proposed key-exchange is the choice of
$\star$-commuting subgroups of the group $G$. The basic idea is to use Lemma
\ref{le:braid_rels} and choose commuting subgroups $A$ and $B$ in $B_n$
and then pull them into $G$ using the epimorphism $\varphi$. The
resulting subgroups $\varphi(A)$ and $\varphi(B)$ of $G$ commute.
Moreover, for any choice of $\pi$ the subgroups
$\varphi(A)$ and $\varphi(B)$ $\star$-commute.

Before we present the algorithm we need to give some details about
the braid group $B_n$. The group $B_n$ has a cyclic center generated by
an element $\Delta^2$ where $\Delta$ is an element called the half twist
and can be expressed in the generators of $B_n$ as follows:
    $$\Delta = (\sigma_1 \ldots \sigma_{n-1}) \cdot (\sigma_1 \ldots \sigma_{n-2}) \cdot \ldots \cdot (\sigma_1).$$
Any element $g \in B_n$ can be uniquely represented in a form
    $$\Delta^p \xi_1 \ldots \xi_p$$
satisfying certain conditions and called the left {\em Garside
normal form}.

Now, since $\Delta^2$ is a central element it follows that
element $u,w$ commute in $B_n$ if and only $u\Delta^{2p}$
and $w\Delta^{2r}$ do (for any choice of $p,r \in \MZ$). Hence, we may always assume that
the normal forms of the generators $\{w_1,\ldots, w_\gamma\}$
and $\{v_1,\ldots, v_\gamma\}$ have the power of $\Delta$
equal to $0$ or $-1$. When we say that we reduce a braid
modulo $\Delta^2$ we mean changing the $\Delta$-power of its normal form to
$-1$ or $0$ depending on parity.

The algorithm below (originally proposed in \cite{AAG2})
generates two $\star$-commuting subgroups.

\begin{algorithm}\label{alg:ttp}{\bf (TTP algorithm)}
\begin{itemize}
    \item[(1)]
Choose two secret subsets $BL = \{b_{l_1}, \ldots, b_{l_\alpha}\}$,
$BR = \{b_{r_1}, \ldots, b_{r_\beta}\}$ of the set of
generators of $B_n$, where $|l_i-r_j|\ge 2$ for all $1\le i\le l_\alpha$
and $1\le j \le r_\beta$.
    \item[(2)]
Choose a secret element $z \in B_n$.
    \item[(3)]
Choose words $\{w_1, \ldots, w_\gamma\}$ of bounded length over the generators $BL$.
    \item[(4)]
Choose words $\{v_1, \ldots, v_\gamma\}$ of bounded length over the generators $BR$.
    \item[(5)]
For each $i = 1, \ldots, \gamma$:
\begin{itemize}
    \item[(a)]
calculate the left normal form of $zw_iz^{-1}$ and reduce the result modulo $\Delta^2$;
    \item[(b)]
put $w_i'$ to be a braid word corresponding to the
element calculated in (a);
    \item[(c)]
calculate the left normal form of $zv_iz^{-1}$ and reduce the result modulo $\Delta^2$;
    \item[(d)]
put $v_i'$ to be a braid word corresponding to the
element calculated in (c).
\end{itemize}
    \item[(6)]
Publish the sets $\{v_1', \ldots, v_\gamma'\}$ and $\{w_1', \ldots, w_\gamma'\}$.
\end{itemize}
\end{algorithm}

We want to point out that TTP algorithm produces generators of two commuting subgroups in $B_n$.
Alice and Bob need to compute their images in $GL(n,\MF_p(\ovt))$ to obtain $\star$-commuting subgroups.

\subsection{Security assumptions}
\label{sec:sec_assum}

It was noticed in \cite{AAG2} that if the conjugator $z$ generated randomly by
TTP algorithm is known then there exists an efficient linear attack on the scheme
which is able to recover the shared key of the parties.
The problem of recovering the exact $z$ seems like a very difficult mathematical problem because
it reduces to solving the system of equations
\begin{equation}\label{eq:system}
\left\{
\begin{array}{l}
w_1' = \Delta^{2p_1} z w_1 z^{-1} \\
\ldots \\
w_\gamma' = \Delta^{2p_\gamma} z w_\gamma z^{-1} \\
v_1' = \Delta^{2r_1} z v_1 z^{-1} \\
\ldots \\
v_\gamma' = \Delta^{2r_\gamma} z v_\gamma z^{-1} \\
\end{array}
\right.
\end{equation}
which has too many unknowns, only left hand sides (i.e., elements $w_1', \ldots, w_\gamma'$, $v_1', \ldots, v_\gamma'$)
are known. Hence, it might be difficult to find the original $z$.


Now observe that the AAGL key exchange protocol uses only the output of TTP algorithm,
namely the tuples $\{v_1', \ldots, v_\gamma'\}$ and
$\{w_1', \ldots, w_\gamma'\}$ since all internal
values in TTP algorithm are not available to the parties.
In other words
it is irrelevant for the protocol how two particular commuting generating sets
were constructed.
This observation leads us to the following problem
\begin{quote}
For tuples $\{v_1', \ldots, v_\gamma'\}$ and $\{w_1', \ldots, w_\gamma'\}$
find any $z'$ and any numbers $p_1,\ldots, p_\gamma, r_1,\ldots, r_\gamma \in \MZ$
such that the words $\{\Delta^{2p_1} z'^{-1} v_1' z',\ldots, \Delta^{2p_\gamma}z'^{-1}v_\gamma' z'\}$
and $\{\Delta^{2r_1} z'^{-1}w_1' z',\ldots, \Delta^{2r_\gamma}z'^{-1} w_\gamma' z'\}$
can be expressed as words over two disjoint commuting subsets of generators of $B_n$.
\end{quote}
This is a new problem for computational group theory. Let us refer to it as
{\em simultaneous conjugacy separation search problem} (abbreviated SCSSP).
We want to emphasize that SCSSP has little in common with the {\em simultaneous
conjugacy search problem} often referenced in the papers on the braid group cryptanalysis.
The main difference is that in the conjugacy search problem both conjugate elements are available
and the goal is to recover the secret conjugator. And in case of SCSSP only the
left side of the equation is known. It is not clear if one of the problems can be reduced
to the other.

It follows from the observation above that {\em any solution $z'$ to a problem stated above plays a role
of a conjugator $z$ and can be used in a linear attack outlined in \cite{AAG2}.}
The main goal of this paper is to present an algorithm which for proposed parameter values
solves SCSSP. Experimental results convince us that our attack is a serious threat for AAGL
as the success rate is $100\%$. Furthermore, a slight modification of the algorithm produces
the exact $z$ generated by TTP in $40\%$ of randomly generated instances.

\subsection{Proposed parameter values}
\label{sec:param_values}
To provide $80$ bits of security against the exhaustive search for $z$
for the scheme the authors propose two slightly different sets of parameters:
\begin{itemize}
    \item
{\bf Parameter set \# 1.}
\begin{itemize}
    \item
Let $n = 14$, $p=13$, and $r=3$.
    \item
Choose the conjugator $z$ randomly of length $17$.
    \item
Choose the words $w_i$ and $v_j$ randomly of length approximately $10$.
    \item
The number $\gamma$ of the words $w_i$ and $v_j$ is $27$.
\end{itemize}
    \item
{\bf Parameter set \# 2.}
\begin{itemize}
    \item
Let $n = 12$, $p=13$, and $r=3$.
    \item
Choose the conjugator $z$ randomly of length $18$.
    \item
Choose the words $w_i$ and $v_j$ randomly of length approximately $10$.
    \item
The number $\gamma$ of the words $w_i$ and $v_j$ is $27$.
\end{itemize}
\end{itemize}

\section{TTP attack}

In this section we describe a heuristic  attack which finds a solution to a given instance of SCSSP.
The main ingredient in our attack is a length function on the group $B_n$.
As it is explained in \cite{MU} there are no known efficiently computable and "sharp"
length functions for braid groups.
Therefore, for our attack we adopt the method of approximation of the geodesic length function
originally proposed in \cite{MSU1}.
In all our algorithms by $|\cdot|$ we denote approximation of the geodesic length function.

 We present results of experiments which show that a fast heuristic procedure
 based on the length-based reduction is extremely successful for the suggested
 parameters.
 In fact, {\em every} instance of TTP algorithm generated in our experiments has been broken.

\subsection{Generation}\label{sec:param_gens}

The original paper \cite{AAG2} lacks any
details on how to randomly generate the secret element $z$ and the
words $\{w_1, \ldots, w_{\gamma}\}$, $\{v_1, \ldots, v_{\gamma}\}$ in TTP algorithm.
Hence, in all our experiments:
\begin{itemize}
    \item
The word $z$ is taken uniformly randomly as a word of a particular length from the
ambient free group $F(\sigma_1, \ldots, \sigma_{n-1})$.
    \item
The words $w_1, \ldots, w_{\gamma}$ and $v_1, \ldots, v_{\gamma}$
are taken uniformly randomly as words of particular lengths from the
ambient free groups $F(BL)$ and $F(BR)$.
\end{itemize}

Also, the authors suggest to take the sets $BL$ and $BR$ randomly on step
(1) of TTP algorithm. Observe that in general this might result in a choice of
$BL$ such that for some $1\le i < j < k\le n-1$
    $$\sigma_i,\sigma_k \in BL, \mbox{ but } \sigma_j \in BR.$$
We think that this situation is not desirable as it excludes the use of at least two
braid generators in the words $w_i$ and $v_j$. We think that
the choice of the following sets
    $$BL = \{\sigma_1, \ldots, \sigma_{l}\} \mathrm{\ and\ } BR =\{\sigma_{l+2}, \ldots, \sigma_{n-1}\}$$
(where $n$ is an even number and $l=(n-2)/2$) is optimal
as it excludes only $\sigma_{l+1}$ which maximizes the size of a space for the words $w_1, \ldots, w_{\gamma}$ and $v_1, \ldots, v_{\gamma}$.

\subsection{Recovering $\Delta$-powers}

The first stage in our attack is recovering $\Delta$ powers in the system
(\ref{eq:system}), i.e., computing numbers $p_1, \ldots, p_\gamma$ and $r_1, \ldots, r_\gamma$.
The main tool in our computations below is the triangular inequality for the Cayley graph of the braid group $B_n$.
Observe that the following inequalities hold.
\begin{itemize}
    \item[]
{\bf (Parameter set \#1)}
For each $i = 1,\ldots, \gamma$
    $$|z^{-1} u_i z|\le 2|z|+|u_i| = 44 ~~\mbox{ and }~~ |z^{-1} w_j z| \le 2|z|+|w_j| = 44$$
and
    $$|\Delta^{2p}| = pn(n-1) = 182p.$$
Hence,
$|\Delta^{2p} z^{-1} u_i z|, |\Delta^{2p} z^{-1} w_j z| \in [182p-44,182p+44]$
and
    $$|\Delta^{2p} z^{-1} u_i z| - |\Delta^{2(p-1)} z^{-1} u_i z| \ge 182-2\cdot 44 = 94$$
    $$|\Delta^{2p} z^{-1} w_j z| - |\Delta^{2(p-1)} z^{-1} w_j z| \ge 182-2\cdot 44 = 94.$$
    \item[]
{\bf (Parameter set \#2)}
For each $i = 1,\ldots, \gamma$
    $$|z^{-1} u_i z|\le 2|z|+|u_i| = 46 ~~\mbox{ and }~~ |z^{-1} w_j z| \le 2|z|+|w_j| = 46$$
and
    $$|\Delta^{2p}| = pn(n-1) = 132p.$$
Hence $|\Delta^{2p} z^{-1} u_i z|, |\Delta^{2p} z^{-1} w_j z|  \in [132p-46,132p+46]$ and
    $$|\Delta^{2p} z^{-1} u_i z| - |\Delta^{2(p-1)} z^{-1} u_i z| \ge 132-2\cdot 46 = 40$$
    $$|\Delta^{2p} z^{-1} w_j z| - |\Delta^{2(p-1)} z^{-1} w_j z| \ge 132-2\cdot 46 = 40.$$
\end{itemize}
This observation implies that for both parameter sets the sequences
$\{|\Delta^{2p} z^{-1} u_i z|\}_{p=0}^\infty$ and
$\{|\Delta^{2p} z^{-1} w_j z|\}_{p=0}^\infty$ are
strictly increasing.
Thus, to recover the original power of $\Delta$
one can repeatedly multiply $u_i'$ (and $w_j'$) on the left by $\Delta^2$
until the length cannot be reduced anymore (see Algorithm \ref{alg:d-red}).
Moreover, since the difference between two elements differing by $\Delta^2$
is at least $40$ even crude approximations of the length function must work.

\begin{algorithm}[$\Delta$-power recovery]\label{alg:d-red} \ \\
{\sc Input:} An element $w \in B_n$.\\
{\sc Output:} An element $u$ minimal in the left coset $\gp{\Delta^{-2}} w$.\\
{\sc Computations:}
\begin{enumerate}
    \item[A.]
Set $u = w$.
    \item[B.]
If $|u| > |\Delta^{-2}u|$ then set $u = \Delta^{-2}u$ and goto B.
    \item[C.]
If $|u| > |\Delta^{2}u|$ then set $u = \Delta^{2}u$ and goto B.
    \item[D.]
Otherwise output $u$.
\end{enumerate}
\end{algorithm}

Clearly Algorithm \ref{alg:d-red} always terminates. The time complexity of the algorithm depends on
the complexity of the procedure which approximates the geodesic length. The procedure is heuristic and
its worst case complexity is not known. Experimental results in \cite{MSU1} suggest that the length approximation can be
efficiently computed for most braid words and we estimate the expected complexity of the procedure as $O(n)$.
Under this assumption, it is easy to see that the power-recovery algorithm can be executed in at most
$O((|w|+n^2)|w|/n^2) = O(|w|^2/n^2+|w|)$ steps as the algorithm performs up to $|w|/n^2$ iterations and on each iteration
for a word $u$ of length up to $|w|$ the length of a word $\Delta^2u$ is estimated.

\subsection{Recovering conjugator}

The second part of the attack computes a secret conjugator.
At this point we assume that all $\Delta$-powers from the system
(\ref{eq:system}) are successfully found and we have a system of equations of the form
\begin{equation}\label{eq:system2}
\left\{
\begin{array}{l}
w_1'' = z w_1 z^{-1} \\
\ldots \\
w_\gamma'' = z w_\gamma z^{-1} \\
v_1'' = z v_1 z^{-1} \\
\ldots \\
v_\gamma'' = z v_\gamma z^{-1} \\
\end{array}
\right.
~~\mbox{ or }~~
\left\{
\begin{array}{l}
z^{-1} w_1'' z = w_1 \\
\ldots \\
z^{-1} w_\gamma'' z = w_\gamma \\
z^{-1} v_1'' z = v_1 \\
\ldots \\
z^{-1} v_\gamma'' z = v_\gamma \\
\end{array}
\right.
\end{equation}
where only elements $u_i'' = \Delta^{-2p_i} u_i'$ and $w_j'' = \Delta^{-2r_i} w_j'$ are known.
Let us call two sets of braids {\em separated} if
they can be expressed as words over disjoint commuting  sets of generators of $B_n$.
As mentioned in Section \ref{sec:sec_assum} to break the protocol it is
sufficient to find any conjugator $z'$ which conjugates two tuples of
elements $(u_1'',\ldots,u_\gamma'')$ and $(w_1'', \ldots,w_\gamma'')$
into two separated tuples of elements $(u_1,\ldots,u_\gamma)$ and $(w_1, \ldots,w_\gamma)$.
This is the main goal of our attack.

Let $\bar{u} = (u_1,\ldots,u_m)$ be a tuple of elements in $B_n$ and
$x$ is an element of $B_n$. Denote by $|\bar{u}|$ the total length
of elements in $\bar{u}$, i.e., put
    $$|\bar{u}| = \sum_{i=1}^m |u_i|.$$
Denote by $\bar{u}^x$ a tuple obtained from $\bar{u}$ by conjugation of each
its element by $x$.
It is intuitively clear that conjugation of a tuple of braids
by a random element $x$ almost always increases the length of
the tuple. In other words, for a random element $x$ the inequality
\begin{equation}\label{eq:assume_length}
|\bar{u}^x| > |\bar{u}|
\end{equation}
is {\em almost always} true.
We do not have a proof of this fact, but numerous
experiments convince us that it is true. Moreover, conjugation by
longer elements almost always results in longer tuples.

The idea that conjugation consequently increases the length of tuples
is not new. It was used in papers \cite{HT}, \cite{GKTTV} for different length functions
with different success. But the most successful is a recent attack \cite{MU} which
uses approximation of the geodesic length.
In this paper we use the idea of separating two tuples of braids.
To find $z'$ we repeatedly conjugate the tuple
    $(u_1'',\ldots,u_\gamma'',w_1'', \ldots,w_\gamma'')$
by generators of $B_n$ and their inverses
and if for some generator $\sigma_k^{\pm 1}$ the decrease of the total length of the tuple
is observed then it is reasonable to guess that $\sigma_k^{\pm 1}$ is involved in $z'$.

\begin{algorithm}[Recovering conjugator - I]
\label{alg:z_red}\ \\
{\sc Input:} Tuples $\bar{a} = \{a_1, \ldots, a_{\gamma}\}$ and $\bar{b} = \{b_1, \ldots, b_{\gamma}\}$.\\
{\sc Output:} An element $z'$ separating tuples $\bar{a}$ and $\bar{b}$.\\
{\sc Initialization:} Set $z' = 1$. \\
{\sc Computations:}
\begin{enumerate}
    \item[A.]
For each $i = 1,\ldots, n-1$ and $\varepsilon = \pm 1$
conjugate tuples $\bar{a}$ and $\bar{b}$ by a generator
$\sigma_i^\varepsilon$ and compute
    $$\delta_{i,\varepsilon} = |\bar{a}^{\sigma_i^\varepsilon}| + |\bar{b}^{\sigma_i^\varepsilon}| - ( |\bar{a}| + |\bar{b}| ).$$
    \item[B.]
If for some $\sigma_i^\varepsilon$ the sets
$\bar{a}^{\sigma_i^\varepsilon}$ and $\bar{b}^{\sigma_i^\varepsilon}$ are separated
then output $z' = \sigma_i^\varepsilon z'$.
    \item[C.]
Otherwise, if all $\delta_{i,\varepsilon}$ are positive (i.e., conjugation by $\sigma_i^\varepsilon$ cannot further decrease the total length)
then output FAILURE.
    \item[D.]
Otherwise choose $i$ and $\varepsilon$ for which $\delta_{i,\varepsilon}$ is minimal.
Set $z' = \sigma_i^\varepsilon z'$, $\bar{a} = \bar{a}^{\sigma_i^\varepsilon}$, and
$\bar{b} = \bar{b}^{\sigma_i^\varepsilon}$. Goto step A.
\end{enumerate}
\end{algorithm}

The described attack is similar to the one described in \cite{MU}.
Recall that the main problem in \cite{MU} was the existence of so-called {\em peaks}
(see \cite[Definition 2.5]{MU}).
This phenomenon is a consequence of difficult structure of finitely generated
subgroups of braid groups. In this paper, we do not have this problem as $z$
is chosen in the whole group $B_n$.

Note that Algorithm \ref{alg:z_red} is a greedy descend procedure. It may fail due to
the fact that there exists a small fraction of words for which
the inequality (\ref{eq:assume_length}) does not hold. It is also prone
to the length approximation errors. One can significantly reduce
the failure rate of a descent procedure by
introducing a backtracking algorithm which allows exploration of more than one search paths.
Algorithm \ref{alg:z_red_BACK} gives an implementation of the attack with backtracking.

\begin{algorithm}[Recovering conjugator with Backtracking]
\label{alg:z_red_BACK}\ \\
{\sc Input:} Tuples $\bar{a} = \{a_1, \ldots, a_{\gamma}\}$ and $\bar{b} = \{b_1, \ldots, b_{\gamma}\}$.\\
{\sc Output:} An element $z'$ separating tuples $\bar{a}$ and $\bar{b}$.\\
{\sc Initialization:} Set $S = \{(\bar{a},\bar{b},1)\}$. \\
{\sc Computations:}
\begin{enumerate}
    \item[A.]
    If $S = \emptyset$ then output FAILURE.
    \item[B.]
    Choose $(\bar{x},\bar{y},c) \in S$ such that $|\bar{x}| + |\bar{y}|$ is the minimal.
    \item[C.]
For each $i = 1,\ldots, n-1$ and $\varepsilon = \pm 1$
conjugate tuples $\bar{x}$ and $\bar{y}$ by a generator
$\sigma_i^\varepsilon$ and compute
    $$\delta_{i,\varepsilon} = |\bar{x}^{\sigma_i^\varepsilon}| + |\bar{y}^{\sigma_i^\varepsilon}| - ( |\bar{x}| + |\bar{y}| ).$$
    \item[D.]
If for some $\sigma_i^\varepsilon$ the sets
$\bar{x}^{\sigma_i^\varepsilon}$ and $\bar{y}^{\sigma_i^\varepsilon}$ are separated
then output $z' = \sigma_i^\varepsilon c$.
  \item[E.]
  Otherwise,  for each $i = 1,\ldots, n-1$ and $\varepsilon = \pm 1$
   add the tuple $(\bar{x}^{\sigma_i^\varepsilon},\bar{x}^{\sigma_i^\varepsilon},\sigma_i^\varepsilon c)$ to the set $S$.
 Goto step A.
\end{enumerate}
\end{algorithm}

We must mention here that, although there is a possibility that Algorithm \ref{alg:z_red_BACK}
outputs FAILURE or   does not terminate on some inputs, this situation has never occurred in our experiments.

Finally, we present another modification of Algorithm \ref{alg:z_red}.
\begin{algorithm}[Recovering conjugator - II]
\label{alg:z_red2}\ \\
{\sc Input:} Tuples $\bar{a} = \{a_1, \ldots, a_{\gamma}\}$ and $\bar{b} = \{b_1, \ldots, b_{\gamma}\}$.\\
{\sc Output:} An element $z'$ separating tuples $\bar{a}$ and $\bar{b}$.\\
{\sc Initialization:} Set $z' = 1$. \\
{\sc Computations:}
\begin{enumerate}
    \item[A.]
For each $i = 1,\ldots, n-1$ and $\varepsilon = \pm 1$
conjugate tuples $\bar{a}$ and $\bar{b}$ by a generator
$\sigma_i^\varepsilon$ and compute
    $$\delta_{i,\varepsilon} = |\bar{a}^{\sigma_i^\varepsilon}| + |\bar{b}^{\sigma_i^\varepsilon}| - ( |\bar{a}| + |\bar{b}| ).$$
    \item[B.]
If all $\delta_{i,\varepsilon}$ are positive (i.e., conjugation by $\sigma_i^\varepsilon$ cannot further decrease the total length)
and the sets $\bar{a}$ and $\bar{b}$ are separated
then output $z'$.
    \item[C.]
If all $\delta_{i,\varepsilon}$ are positive (i.e., conjugation by $\sigma_i^\varepsilon$ cannot further decrease the total length),
but the sets $\bar{a}$ and $\bar{b}$ are not separated then output FAILURE.
    \item[D.]
Otherwise choose $i$ and $\varepsilon$ for which $\delta_{i,\varepsilon}$ is minimal.
Set $z' = \sigma_i^\varepsilon z'$, $\bar{a} = \bar{a}^{\sigma_i^\varepsilon}$, and
$\bar{b} = \bar{b}^{\sigma_i^\varepsilon}$. Goto step A.
\end{enumerate}
\end{algorithm}

Algorithms \ref{alg:z_red} and \ref{alg:z_red2} are almost the same except that they
have different termination conditions. Algorithm  \ref{alg:z_red} stops as soon as the tuples are separated,
while Algorithm  \ref{alg:z_red2} tries to minimize the total length of the tuple
and when the minimal value is reached it checks if the current tuples are separated.

The complexity of step A in Algorithms \ref{alg:z_red} and \ref{alg:z_red2} is $O(\gamma n (|\overline{a}_i|+|\overline{b}_i|))$.
The maximal number of iterations can be bounded by the total length
of the input $|\overline{a}_i|+|\overline{b}_i|$. A very crude upper bound on the complexity of the two algorithms is
$O(\gamma n (|\overline{a}_i|+|\overline{b}_i|)^2)$.

The complexity of Algorithm \ref{alg:z_red_BACK} is harder to estimate. Potentially, the backtracking
mechanism may cause the algorithm to explore exponentially many potential solutions. However, our experiments show that a very few
backtracking steps are required to find a solution.

\subsection{Results of experiments}

The attack was tested on different sets of instances of the protocol. In particular
we generated the sets $BL$ and $BR$ randomly and used fixed sets
    $BL = \{\sigma_1, \ldots, \sigma_{l}\}$ and $BR =\{\sigma_{l+2}, \ldots, \sigma_{n-1}\}$.
We used the proposed values of the parameters (see Section \ref{sec:param_values}).
In addition the attack was tested on instances generated with the increased length of the secret conjugator $z$.

In all the experiments Algorithm \ref{alg:z_red_BACK} had $100\%$ success of
producing a separating conjugator $z'$. The average time of a run of the algorithm was 4.5 seconds when
executed on a Dual Core Opteron 2.2 GHz machine with 4GB of ram.
The algorithm without backtracking
had slightly smaller  but still respectable success rate of 90\%.
It is very interesting to notice that Algorithm \ref{alg:z_red2} actually recovered
the original secret conjugator $z$ in about $40\%$ of the cases.
That is the reason we mention this algorithm in the paper.

Experiments with instances of TTP protocol generated using $|z| = 50$ (which is almost three times
greater than the suggested value) again showed 100\% success rate.
However, we need to point out that the attack may fail when the length of $z$ is large
relative to the length of $\Delta^2$. For instance when in the second parameter set
the length of $z$ is increased to $100$,
the algorithm recovering $\Delta$-powers sometimes output wrong values.
Nevertheless, the success rate of Algorithm \ref{alg:z_red_BACK} is still about 90\% in this case.
We think it is possible to modify our algorithms to work with increased parameter values.
But the biggest concern here is that the protocol with increased parameter values
might be not suitable for purposes of lightweight cryptography.

\end{document}